\theoremstyle{plain}
\newtheorem{theorem}{Theorem}[section]
\newtheorem{corollary}{Corollary}[theorem]
\begin{document}
\title{On a Finite Group Generated by Subnormal Supersoluble Subgroups}
\author{Victor S. Monakhov\footnote{
Department of Mathematics and Programming Technologies, Francisk Skorina Gomel State University, Belarus,
victor.monakhov@gmail.com}}
\date{}
\maketitle

{\small
\leftskip=1cm \rightskip=1cm

\noindent\textbf{Abstract.} 
Supersolubility of a finite group~$G=\langle A,B\rangle$
with the nilpotent derived subgroup~$G^\prime$ is established under
the condition that the subgroups~$A$ and $B$ are both subnormal
and supersoluble.

\medskip

\noindent\textbf{Mathematics Subject Classification (2010).} 20D10; 20D35.

\medskip

\noindent\textbf{Keywords.} Finite group, supersoluble group, subnormal subgroup, derived subgroup.

}

\section{Main Result}

All groups in this paper are finite.
We use the standard notations and terminology of~\cite{hup}.

B.~Huppert~\cite{hup53} and R.~Baer~\cite{b57} gave the first
examples of nonsupersoluble groups
that were a product of normal supersoluble subgroups.
R.~Baer~\cite{b57} established supersolubility of a group~$G=AB$
with the nilpotent derived subgroup~$G^\prime$
and normal supersoluble subgroups~$A$ and~$B$.
A.\,F.~Vasil'ev and T.\,I.~Vasil'eva~\cite{vv} showed that
instead of nilpotency of the derived subgroup $G'$
it is enough to require nilpotency of the $\mathcal A$-residual,
where~$\mathcal A$ is the formation of all groups with abelian Sylow subgroups.
In these results, normality of subgroups~$A$ and~$B$
can be weakened to subnormality of $A$ and $B$~\cite{mch}.
This themes was developed in many papers,
see for example~\cite{b}.

If a group~$G$ is generated by subnormal subgroups~$X$ and~$Y$,
then it is not always true that $G=XY$.
The simplest examples are the dihedral group of order~8
and nonabelian group of order~$p^3$ and exponent~$p$.
The normal closure~$X^G$ of a subnormal supersoluble subgroup~$X$ in a group $G$
can be a nonsupersoluble subgroup.
For example, $G=PSU_3(2)\rtimes C_2$~\cite[SmallGroup(144,182)]{gap}
contains a nonsupersoluble maximal subgroup
$H=S_3\wr C_2$~\cite[SmallGroup(72,40)]{gap} of index~2.
A subgroup~$X=S_3\times S_3$ of~$H$ is supersoluble and not normal in~$G$.
Since~$|H:X|=2$, $X^G=H$ and~$X$ is subnormal in~$G$.

Next, we write $G=\langle A,B\rangle$ when a group~$G$ is generated by subgroups~$A$ and~$B$.

In this paper, we prove the following theorem.

\begin{theorem}
Let~$A$ and~$B$ be subnormal supersoluble subgroups of a group~$G$
and let~$G=\langle A,B\rangle$.
Then $G$ is metanilpotent and has a Sylow tower of supersoluble type.
Furthermore, $G$ is supersoluble when one of the following conditions holds.

$(1)$ $G^\mathcal{A}$ is nilpotent.

$(2)$ $(|A:A^\prime |,|B:B^\prime |)=1$.
\end{theorem}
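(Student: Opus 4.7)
I would proceed by induction on $|G|$, treating the four conclusions in the stated order. Since soluble groups form a Fitting class, the subgroup $G = \langle A, B\rangle$ generated by the subnormal soluble subgroups $A$ and $B$ is soluble. For metanilpotency, note that $A'$ and $B'$ are nilpotent (a supersoluble group has nilpotent derived subgroup); being characteristic in the subnormal subgroups $A, B$, they are subnormal nilpotent in $G$, so $A', B' \leq F(G)$. The images $\bar A = AF(G)/F(G)$ and $\bar B = BF(G)/F(G)$ are therefore abelian, and being subnormal nilpotent in $\bar G = G/F(G)$, each lies in $F(\bar G)$; hence $\bar G = \langle \bar A, \bar B\rangle \leq F(\bar G)$ is nilpotent, which is the required metanilpotency.

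For the Sylow tower of supersoluble type, let $p$ be the largest prime dividing $|G|$. Since $p$ is at least the largest prime of $|A|$ and of $|B|$, the Sylow $p$-subgroups $A_p, B_p$ are normal in the supersoluble groups $A, B$ (trivially if $p$ does not divide the order), hence characteristic there and thus subnormal in $G$. As $p$-groups form a Fitting class, $A_p, B_p \leq O_p(G)$. Therefore the images of $A, B$ in $G/O_p(G)$ are subnormal $p'$-subgroups, and the Fitting class property for $p'$-groups gives that $G/O_p(G) = \langle \bar A, \bar B\rangle$ is itself a $p'$-group. Hence $O_p(G)$ is a normal Sylow $p$-subgroup, and applying induction to $G/O_p(G)$ produces the tower.

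For the supersolubility statements, I would take a minimal counterexample $G$. Standard reductions --- $\Phi(G) = 1$ because $\mathcal{U}$ is saturated, and uniqueness of a minimal normal subgroup because otherwise $G$ would embed in a direct product of two supersoluble quotients --- give a unique minimal normal subgroup $M$ with $M = F(G) = C_G(M)$, an elementary abelian $p$-group of $\mathbb{F}_p$-rank $n$, on which $G/M$ acts faithfully and irreducibly. Combining the Sylow tower proved above with the decomposition $F(G) = \prod_q O_q(G)$ forces $M$ to be the Sylow $p$-subgroup of $G$, so $|G/M|$ is coprime to $p$. Under (1), $G^\mathcal{A} \leq F(G) = M$ makes $G/M$ have abelian Sylow subgroups; combined with the metanilpotency ($G/M$ nilpotent) this makes $G/M$ abelian. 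Under (2), $A', B' \leq M$ makes $\bar A = AM/M$ and $\bar B = BM/M$ abelian of coprime orders, and the Fitting class property for $\pi$-groups places them in complementary radicals $O_{\pi(\bar A)}(G/M)$ and $O_{\pi(\bar B)}(G/M)$ which commute elementwise, yielding $G/M = \bar A \times \bar B$.

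The crux is now to extract $n = 1$. First, $A \cap M \neq 1$: otherwise the Sylow $p$-subgroup of $A$ would lie in $A \cap M = 1$, so $A$ would be a subnormal $p'$-subgroup of $G$, hence contained in $O_{p'}(G)$; but any normal $p'$-subgroup of $G$ centralizes $M = C_G(M)$ and therefore is trivial, giving $A = 1$ and $G = B$ supersoluble, a contradiction. Supersolubility of $A$ then furnishes an $A$-invariant $\mathbb{F}_p$-line $L_A$ inside $A \cap M$, and likewise a line $L_B$ for $B$. In each case the $\bar A$-isotypic component of $L_A$ in $M$ is stable under the subgroup of $G/M$ commuting with $\bar A$ --- namely $G/M$ itself under (1), since $G/M$ is abelian, and $\bar B$ under (2), since $G/M = \bar A \times \bar B$ --- hence $G/M$-stable, and hence equal to $M$ by the irreducibility of $M$. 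Therefore $\bar A$ acts on $M$ as a scalar in $\mathbb{F}_p^\times$, and the same holds for $\bar B$. Thus $G/M$ acts on $M$ entirely by scalars from $\mathbb{F}_p^\times$, so every $\mathbb{F}_p$-line of $M$ is $G$-invariant, contradicting minimality of $M$ whenever $n \geq 2$. The principal technical obstacle is precisely this final Clifford-isotypic step: combining supersolubility of $A, B$ with the commuting structure of $G/M$ to force scalar action and thereby $n = 1$.
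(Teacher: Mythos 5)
Your argument is correct. The first two assertions (the Sylow tower via $O_p(G)$ for the largest prime $p$, and metanilpotency via $A',B'\le F(G)$ together with the subnormal abelian images of $A$ and $B$ in $G/F(G)$) follow the paper's proof essentially verbatim. For the supersolubility claims, however, you take a genuinely different route. The paper shows in both cases that $G/F(G)$ is abelian, hence $G'\le F(G)$ and $AG'$, $BG'$ are normal in $G$; it then invokes Lemma~10 of Monakhov--Chirik to conclude that $AG'$ and $BG'$ are supersoluble, and finishes by Baer's theorem on products of normal supersoluble subgroups of a group with nilpotent derived subgroup. You instead run a minimal-counterexample argument: using that $\mathcal{U}$ is a saturated formation and that the hypotheses (including conditions (1) and (2)) pass to quotients, you reduce to a primitive soluble group with $M=F(G)=C_G(M)=O_p(G)$ the unique minimal normal subgroup and full Sylow $p$-subgroup, show $\bar A$ and $\bar B$ are central in $G/M$ (because $G/M$ is abelian in case (1), and $G/M=\bar A\times\bar B$ with $\bar A,\bar B$ abelian in case (2)), extract $A$- and $B$-invariant lines inside the nontrivial normal subgroups $A\cap M$ and $B\cap M$ from supersolubility, and use the eigenspace argument to force $G/M=\langle\bar A,\bar B\rangle$ to act on $M$ by scalars, whence $\dim_{\mathbb{F}_p}M=1$ and $G$ is supersoluble after all. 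All the individual steps check out (in particular $A\cap M$ really is the Sylow $p$-subgroup of $A$, and the $\lambda$-eigenspace is nonzero and stable under everything centralizing the image of $\bar A$ in $GL(M)$, which is all of $G/M$ in both cases). What your version buys is self-containment: it effectively reproves, inside the primitive quotient, exactly the cases needed of both Baer's theorem and the cited Lemma~10, at the cost of the formation-theoretic reductions; the paper's version is much shorter but imports two nontrivial results from the literature. Both arguments pivot on the same structural fact, $G'\le F(G)$, which you obtain in the same way as the paper.
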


\begin{proof}
Let $p\in \pi (G)$ and let $A_p$ and $B_p$ be Sylow $p$-subgroups of $A$ and~$B$, respectively.
Suppose that $p$ is the largest prime in~$\pi (G)$.
Then $A_p$ and $B_p$ are normal in~$A$ and~$B$, respectively~\cite[VI.9.1]{hup}.
Therefore $A_p$ and $B_p$ are subnormal in~$G$ and
$\langle A_p,B_p\rangle \le O_p(G)$.
By induction, $G/O_p(G)$ has a Sylow tower of supersoluble type.
Since $AO_p(G)/O_p(G)$ and $BO_p(G)/O_p(G)$ are subnormal $p^\prime$-subgroups of  $G/O_p(G)$,
$$
G/O_p(G)=\langle AO_p(G)/O_p(G),BO_p(G)/O_p(G)\rangle
$$
is a $p^\prime$-group. Hence $G$ has a Sylow tower of supersoluble type.

Since~$A^\prime $ is nilpotent~\cite[VI.9.1]{hup} and subnormal in~$G$,
we have $A^\prime \le F(A)\le F(A)^G\le F(G)$.
Consequently,
$$
AF(G)/F(G)\cong A/(A\cap F(G))
$$
is abelian and subnormal in~$G/F(G)$. Similarly,
$$
B^\prime \le F(B)\le F(B)^G\le F(G),\ BF(G)/F(G)\cong B/(B\cap F(G)),
$$
therefore $BF(G)/F(G)$ is abelian and subnormal in~$G/F(G)$.
It is clear that
$$
G/F(G)=\langle AF(G)/F(G),BF(G)/F(G)\rangle. \eqno (\ast)
$$
So,
$(AF(G)/F(G))^{G/F(G)}$ and~$(BF(G)/F(G))^{G/F(G)}$ are
normal in $G/F(G)$ and nilpotent. Hence
$$
G/F(G)=(AF(G)/F(G))^{G/F(G)}(BF(G)/F(G))^{G/F(G)}
$$
is nilpotent, and $G$ is metanilpotent.

(1) Let $G^\mathcal{A}$ be nilpotent.
Then $G^\mathcal{A}\le F(G)$.
Since~$G$ is metanilpotent, we conclude that $G/F(G)$ is nilpotent,
hence~$G/F(G)$ is abelian and~$G^\prime \le F(G)$.
Therefor~$AG^\prime$ and~$BG^\prime$ is normal in~$G$.
According to~\cite[Lemma~10]{mch},
$AG^\prime $ and~$BG^\prime $ are supersoluble.
Hence $G=(AG^\prime )(BG^\prime )$ is supersoluble by Baer's Theorem.

(2) Let  $(|A:A^\prime |,|B:B^\prime |)=1$.
Since $A^\prime B^\prime \subseteq F(G)$, we get
$$
(|AF(G)/F(G)|,|BF(G)/F(G)|)=1.
$$
From~$(\ast)$, it follows that
$$
G/F(G)=AF(G)/F(G)\times BF(G)/F(G)
$$
is abelian and~$G^\prime \le F(G)$.
Therefore $G=(AG^\prime )(BG^\prime )$ is supersoluble.
\end{proof}

\begin{corollary}
Let~$A$ and~$B$ be subnormal supersoluble subgroups of a group~$G$
and let~$G=\langle A,B\rangle$. If the derived subgroup of $G$ is nilpotent,
then~$G$ is supersoluble.
\end{corollary}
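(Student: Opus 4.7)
The plan is to deduce the corollary immediately from part~(1) of the theorem, by showing that the hypothesis ``$G'$ nilpotent'' is a special case of ``$G^{\mathcal{A}}$ nilpotent''. The theorem has already established that $G$ is metanilpotent with a supersoluble Sylow tower under the mere hypotheses on $A$ and $B$, so all that remains is to verify one of the two sufficient conditions for full supersolubility.

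First, I would recall that the formation $\mathcal{A}$ consists of all groups whose Sylow subgroups are abelian, and in particular every abelian group lies in $\mathcal{A}$. Since the quotient $G/G'$ is abelian by definition of the derived subgroup, it belongs to $\mathcal{A}$. The $\mathcal{A}$-residual $G^{\mathcal{A}}$ is the smallest normal subgroup of $G$ with quotient in $\mathcal{A}$, so $G^{\mathcal{A}} \le G'$.

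Next, the hypothesis that $G'$ is nilpotent, together with the fact that every subgroup of a nilpotent group is nilpotent, gives that $G^{\mathcal{A}}$ is nilpotent. Hence condition~(1) of the theorem is fulfilled, and the theorem itself delivers the supersolubility of $G$.

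There is essentially no obstacle here: the whole content of the corollary is the chain of inclusions $G^{\mathcal{A}} \le G' \le F(G)$ derived from elementary properties of residuals, and the heavy lifting has already been done in the proof of the theorem. I would therefore present the corollary as a one-line consequence, remarking only that nilpotency of $G'$ is a genuinely stronger assumption than nilpotency of $G^{\mathcal{A}}$, which is why part~(1) of the theorem is the sharper statement.
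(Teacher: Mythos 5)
Your proposal is correct and matches the paper's (implicit) intent: the corollary is stated without proof precisely because it follows at once from part~(1) of the theorem via the observation that $G/G'$ is abelian, hence lies in $\mathcal{A}$, so $G^{\mathcal{A}}\le G'$ and nilpotency of $G'$ forces nilpotency of $G^{\mathcal{A}}$. Your closing remark that nilpotency of $G'$ is the strictly stronger hypothesis is also accurate and explains why the theorem, not the corollary, is the sharper result.
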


\end{document}